\numberwithin{equation}{section}
\newtheorem{theorem}{Theorem}[section]
\newtheorem{lemma}[theorem]{Lemma}
\newtheorem{proposition}[theorem]{Proposition}
\newtheorem{remark}[theorem]{Remark}
\newtheorem{assertion}[theorem]{Assertion}
\newtheorem{theoremletter}{Theorem}
\newcommand{\R}{{\mathbb R}}
\renewcommand{\S}{{\mathcal S}}
\newcommand{\eps}{\varepsilon}
\renewcommand{\epsilon}{\varepsilon}
\renewcommand{\theta}{{\vartheta}}
\renewcommand{\H}{{\mathcal H}}
\renewcommand{\rightarrow}{\to}
\newcommand{\ud}{\mathrm{d}}
\newcommand{\N}{\mathbb{N}}
\title[Nonlocal problems with Trudinger-Moser nonlinearity]{Ground states of nonlocal scalar field equations  
with \\ Trudinger-Moser critical nonlinearity}  
\author[J.M.\ do \'O]{Jo\~ao Marcos do \'O}
\author[O.H.\ Miyagaki]{Ol\'{i}mpio H.\ Miyagaki}
\author[M. Squassina]{Marco Squassina}
\address[J.M. do \'O]{Department of Mathematics,
Federal University of Para\'{\i}ba
\newline\indent
58051-900, Jo\~ao Pessoa-PB, Brazil}
\email{\href{mailto:jmbo@pq.cnpq.br}{jmbo@pq.cnpq.br}}
\address[O.\ Miyagaki]{Department of Mathematics, 
 Federal University of Juiz de Fora
\newline\indent 
36036-330  Juiz de Fora, Minas Gerais, Brazil}
\email{\href{mailto:olimpio@ufv.br}{olimpio@ufv.br}}
\address[M.\ Squassina]{Dipartimento di Informatica 
Universit\`a degli Studi di Verona,
\newline\indent
C\'a Vignal 2, Strada Le Grazie 15, I-37134 Verona, Italy}
\email{\href{mailto:marco.squassina@univr.it}{marco.squassina@univr.it}}
\thanks{The present research was partially supported by INCTmat/MCT/Brazil.\ Jo\~ao Marcos do \'O was supported by CNPq, CAPES/Brazil, 
Ol\'{i}mpio H.\ Miyagaki was partially supported by CNPq/Brazil and  CAPES/Brazil (Proc 2531/14-3). 
The paper was completed while the second author was visiting the
Dept of Math of Rutgers University, whose hospitality he gratefully
 acknowledges.}
\subjclass[2000]{35J60, 35B09, 35B33, 35R11}
\keywords{Trudinger-Moser inequality, fractional Laplacian, ground state solutions}
\begin{document}

\begin{abstract}
We investigate the existence of ground state solutions for a class of nonlinear scalar field equations defined on whole real line, 
involving a fractional Laplacian and nonlinearities with Trudinger-Moser critical growth.\ We handle the lack of compactness 
of the associated energy functional due to the unboundedness of the domain and the presence of a limiting case embedding. 
\end{abstract}
\maketitle




\section{Introduction and main result}

\noindent
The goal of this paper is to investigate the existence of ground state solutions $u\in H^{1/2}(\R)$ for the 
following class of nonlinear scalar field equations 
\begin{equation}
\label{PS}
(-\Delta)^{1/2} u + u =  f(u) \quad\,\, \text{in $\R$},
\end{equation}
where $f:\R\to\R$ is a smooth nonlinearity in the critical growth range.  Precisely, 
we focus here on the case when $f$ has the {\em maximal growth} 
which allows to study problem \eqref{PS} variationally in the Sobolev space  $u\in H^{1/2}(\R)$, see Section~\ref{prelim-sect}.
We are motivated by the following Trudinger-Moser type inequality due to  Ozawa \cite{Ozawa}.
\begin{theoremletter}
\label{Ozawa}
 There exists $0 < \omega \leq \pi$ such that, for all $\alpha \in (0, \omega)$, there exists $H_{\alpha}>0$ with
\begin{equation}\label{Ia}
\int_{\R} (e^{\alpha u^2}-1) \, \ud x \leq H_{\alpha}\|u\|^{2}_{L^2},
\end{equation}
for all $u\in H^{1/2}(\R)$ with $\|(-\Delta)^{1/4} u\|^2_{L^2}\leq 1$.
\end{theoremletter}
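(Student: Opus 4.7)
The plan is to combine the Taylor series of $e^{\alpha u^2}-1$ with sharp control of the $L^p$ norms of $H^{1/2}(\R)$ functions. Expanding termwise,
$$\int_{\R} (e^{\alpha u^2}-1)\,\ud x = \sum_{k=1}^{\infty} \frac{\alpha^k}{k!}\, \|u\|_{L^{2k}}^{2k},$$
so the inequality reduces to majorizing each $\|u\|_{L^{2k}}^{2k}$ by a quantity whose series, under the constraint $\|(-\Delta)^{1/4} u\|_{L^2} \leq 1$, sums to a finite multiple of $\|u\|_{L^2}^2$.

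The heart of the argument is a fractional Gagliardo-Nirenberg inequality with sharp dependence on the exponent, of the form
$$\|u\|_{L^p}^p \leq C^p\, p^{p/2}\, \|(-\Delta)^{1/4} u\|_{L^2}^{p-2}\, \|u\|_{L^2}^2, \qquad p \geq 2,$$
valid for all $u \in H^{1/2}(\R)$ with a universal constant $C$. To establish it I would decompose $\hat u$ into low- and high-frequency parts at a cutoff $R=R(p)$, bound the low-frequency piece $v$ in $L^\infty$ via Cauchy-Schwarz in frequency (splitting $\int_{|\xi|\leq R}$ as $\int_{|\xi|\leq 1}+\int_{1<|\xi|\leq R}$ to absorb the logarithmic singularity of $|\xi|^{-1}$ at the origin), control the high-frequency tail $w$ in $L^2$ by $R^{-1/2}\,\|(-\Delta)^{1/4} u\|_{L^2}$, interpolate to obtain $L^p$ bounds on each piece via $\|v\|_{L^p}^p\leq \|v\|_{L^\infty}^{p-2}\|v\|_{L^2}^2$ and Hausdorff-Young for $w$, and finally optimize $R$ in $p$ so as to produce the $p^{p/2}$ growth.

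Plugging the Gagliardo-Nirenberg estimate with $p=2k$ into the Taylor series, and using Stirling's formula $k!\sim \sqrt{2\pi k}\,(k/e)^k$, one obtains
$$\sum_{k=1}^{\infty} \frac{\alpha^k}{k!}\, \|u\|_{L^{2k}}^{2k} \leq C\,\|u\|_{L^2}^2 \sum_{k=1}^{\infty} \frac{(2 e C^2\alpha)^k}{\sqrt{k}},$$
which converges whenever $\alpha$ is smaller than the explicit threshold $\omega_0 = 1/(2eC^2)$. Setting $\omega := \min\{\omega_0, \pi\}$ and defining $H_\alpha$ to be $C$ times the sum of the resulting series then yields the stated inequality.

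The main obstacle is obtaining precisely the $p^{p/2}$ growth of the constant in the Gagliardo-Nirenberg estimate: any rate $p^q$ with $q>1/2$ would destroy summability after division by $k!$, while a sharper analysis (via symmetric decreasing rearrangement, refined Besov embeddings, or the harmonic extension to $\R^2_+$ that relates the problem to the classical Moser inequality on the half-plane) would be needed to attain the optimal threshold $\omega=\pi$. This delicate Fourier-analytic balance between low and high frequencies reflects the critical nature of the embedding of $H^{1/2}(\R)$ into an exponential Orlicz class, and it is the pivotal technical ingredient of the proof.
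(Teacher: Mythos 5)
The paper does not prove Theorem~\ref{Ozawa}; it is quoted as a known result from Ozawa's article \cite{Ozawa}, so there is no in-paper argument to compare against. That said, your reconstruction follows essentially the route Ozawa himself takes: term-by-term expansion of $e^{\alpha u^{2}}-1$, a Gagliardo--Nirenberg-type interpolation estimate of the form $\|u\|_{L^{p}}\le C\sqrt{p}\,\|(-\Delta)^{1/4}u\|_{L^{2}}^{1-2/p}\|u\|_{L^{2}}^{2/p}$ in which the $\sqrt{p}$-growth of the constant is the whole point, and then a Stirling-based summation of the series. Your frequency split at a cutoff $R$, the Cauchy--Schwarz bound for $\|v\|_{L^{\infty}}$ with the $\int_{|\xi|\le 1}+\int_{1<|\xi|\le R}$ decomposition to tame the logarithmic singularity of $|\xi|^{-1}$, and the Hausdorff--Young plus H\"older treatment of the tail $w$ do yield the required $p^{p/2}$ bound; the $2^{p-1}$ loss from the triangle inequality in $L^{p}$ is harmlessly absorbed into $C^{p}$.

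One point you should make explicit is the reduction $\|u\|_{L^{2}}=\|(-\Delta)^{1/4}u\|_{L^{2}}=1$ using the two-parameter invariance $u\mapsto c\,u(\lambda\cdot)$ (or, equivalently, a $u$-dependent choice of the cutoff $R$). Without it, the high-frequency contribution from Hausdorff--Young comes out as $(p-2)^{(p-2)/2}R^{-1}\|(-\Delta)^{1/4}u\|_{L^{2}}^{p}$, which carries the wrong homogeneity $\|(-\Delta)^{1/4}u\|_{L^{2}}^{p}$ in place of the needed $\|(-\Delta)^{1/4}u\|_{L^{2}}^{p-2}\|u\|_{L^{2}}^{2}$; after normalizing, a fixed $R$ already works. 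Finally, as you correctly flag, this argument only produces some $\omega_{0}=1/(2eC^{2})>0$ rather than the conjecturally optimal $\omega=\pi$, but since the theorem asserts merely the existence of some $0<\omega\le\pi$, that is sufficient.
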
 
\noindent
From inequality \eqref{Ia} we have naturally associated notions of {\em subcriticality} and {\em criticality} 
for this class of problems. Precisely, we say that $f:\R \to \R$ has subcritical growth at $\pm\infty$ if 
$$
\limsup_{s\to \pm\infty} \frac{f(s)}{e^{\alpha s^2}-1}=0, \quad \mbox{for all $\alpha >0$},
$$
and has $\alpha_0$-critical growth at $\pm\infty$ if there exists $\omega\in (0,\pi]$ and $\alpha_0\in (0,\omega)$ such that 
\begin{align*}
\limsup_{s\to \pm\infty} \frac{f(s)}{e^{\alpha s^2}-1}&=0, \quad \mbox{for all $\alpha >\alpha_0$}, \\
\limsup_{s\to \pm\infty} \frac{f(s)}{e^{\alpha s^2}-1}&=\pm\infty, \quad\mbox{for all $\alpha <\alpha_0$}.
\end{align*}
For instance let $f$ be given by
\[
f(s)=s^3 e^{\alpha_0 |s|^{\nu}} \quad \mbox{for all $s \in \R$}.
\]
If $\nu <2, \; f$ has subcritical growth, and while if  $\nu =2\;$ and $ \; \alpha_0 \in (0,\omega],  \; f$ has critical growth.
By a {\em ground state} solution to problem \eqref{PS} we mean a nontrivial weak solution of \eqref{PS} with the least possible energy.


\vskip4pt
\noindent
The following assumptions on $f$ will be needed throughout the paper:
\begin{description}
\item[(f1)]  $f:\R\to\R$ is $C^1,$  odd, convex function  on $\R^+,$ and
$$
\lim_{s\to 0}\frac{f(s)}{s}=0.
$$
\item[(f2)] $s\mapsto s^{-1}f(s)$ is an increasing function for $s>0$.
\item[(f3)] there are $q>2$ and $C_q>0$ with
$$
F(s)\geq C_q |s|^q, \quad \text{for all $s\in\R$}.
$$
\item[(AR)]  there exists $\theta>2$ such that 
$$
\theta F(s)\leq sf(s), \quad \text{for all $s\in\R$},\qquad F(s)=\int_0^s f(\sigma)\ud\sigma.
$$
\end{description}

\noindent
The main result of the paper is the following

\begin{theorem}
\label{Theorem}
Let $f(s)$ and $f'(s)s$ have $\alpha_0$-critical growth and satisfy  $\mathbf{(f1)}$-$\mathbf{(f3)}$ and $\mathbf{(AR)}$. Then problem 
\eqref{PS} admits a ground state solution $u\in H^{1/2}(\R)$ provided $C_q$ in $\mathbf{(f3)}$ is large enough. 
\end{theorem}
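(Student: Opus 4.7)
The plan is a mountain-pass/Nehari-manifold attack on the energy functional associated with \eqref{PS}; the decisive point is to push the mountain-pass level strictly below a first noncompactness threshold dictated by Ozawa's inequality \eqref{Ia}.\ I first introduce the $C^1$ functional
\[
I(u)=\tfrac{1}{2}\bigl(\|(-\Delta)^{1/4}u\|_{L^2}^2+\|u\|_{L^2}^2\bigr)-\int_\R F(u)\,\ud x,\qquad u\in H^{1/2}(\R),
\]
whose critical points are the weak solutions of \eqref{PS}.\ Well-posedness and regularity of $I$ follow from the elementary bound $|F(s)|\le \eps s^2+C_\eps(e^{\alpha s^2}-1)|s|^p$ (for arbitrary $\alpha>\alpha_0$ and $p>2$), combined with the embedding $H^{1/2}(\R)\hookrightarrow L^p(\R)$ and \eqref{Ia}.\ Assumption \textbf{(f1)} together with this bound yields $I(u)\ge\rho_0>0$ on a small sphere $\|u\|_{H^{1/2}}=r_0$, while \textbf{(AR)} produces $v\in H^{1/2}(\R)$ with $I(v)<0$, so $I$ has the mountain-pass geometry with level $c>0$.\ Assumption \textbf{(f2)} makes the Nehari manifold $\mathcal N=\{v\neq 0:\langle I'(v),v\rangle=0\}$ a natural constraint: for each $u\neq 0$ there is a unique $t_u>0$ with $t_u u\in\mathcal N$, and $t_u u$ maximizes $t\mapsto I(tu)$; consequently $c=\inf_{\mathcal N} I$, and any critical point attaining this level is a ground state.

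The crucial upper estimate is $c<c^*$, where $c^*\asymp \omega/\alpha_0$ is the level below which a $(PS)_c$-sequence $(u_n)$ lies in the range of applicability of \eqref{Ia}.\ Indeed, \textbf{(AR)} yields $\|u_n\|_{H^{1/2}}^2\le \tfrac{2\theta}{\theta-2}\,c+o(1)$, so whenever $c<c^*$ one finds $r>1$ such that $\{(e^{\alpha u_n^2}-1)^r\}$ is bounded in $L^1(\R)$ for some $\alpha>\alpha_0$ sufficiently close to $\alpha_0$.\ To realize $c<c^*$ I invoke \textbf{(f3)}: fixing any $v_0\in H^{1/2}(\R)\setminus\{0\}$,
\[
c\le \max_{t\ge 0}I(tv_0)\le \max_{t\ge 0}\Bigl(\tfrac{t^2}{2}\|v_0\|_{H^{1/2}}^2-C_q t^q\|v_0\|_{L^q}^q\Bigr)=\tfrac{q-2}{2q}\bigl(qC_q\|v_0\|_{L^q}^q\bigr)^{-\frac{2}{q-2}}\|v_0\|_{H^{1/2}}^{\frac{2q}{q-2}},
\]
and the right-hand side becomes smaller than $c^*$ as soon as $C_q$ is sufficiently large, which is precisely the quantitative content of the hypothesis in Theorem~\ref{Theorem}.

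With a $(PS)_c$-sequence $(u_n)$ in hand, \textbf{(AR)} also gives boundedness and hence a weak limit $u_n\rightharpoonup u$ in $H^{1/2}(\R)$.\ The translation invariance of \eqref{PS} on $\R$ is treated through Lions' vanishing/non-vanishing alternative on $\sup_{y\in\R}\int_{y-1}^{y+1}|u_n|^2\,\ud x$: vanishing, combined with critical growth and \eqref{Ia}, forces $\int F(u_n)\to 0$ and thus $c=0$, contradicting $c\ge \rho_0$; otherwise, after an appropriate translation of $u_n$, I secure $u\neq 0$.\ The strict inequality $c<c^*$ then supplies, via a Vitali-type equi-integrability argument based on the higher-integrability refinement of \eqref{Ia} mentioned above, the convergence $f(u_n)\to f(u)$ and $F(u_n)\to F(u)$ in $L^1_{\mathrm{loc}}(\R)$ needed to pass to the limit in $I'(u_n)\to 0$, so $u$ is a weak solution of \eqref{PS}.\ Finally, $u\in\mathcal N$ together with Fatou gives $I(u)\le \liminf I(u_n)=c$, and since $c=\inf_{\mathcal N} I$ this forces $I(u)=c$: $u$ is the sought ground state.

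The hard part is the compactness analysis: the exponential critical growth combined with the translation-noncompactness of $\R$ forces one to use $c<c^*$ quantitatively through a higher-integrability refinement of Ozawa's inequality, while simultaneously tracking possible shifts of mass to infinity via Lions-type concentration arguments.\ It is the delicate interplay between the strict upper bound from \textbf{(f3)} and the sharp threshold in \eqref{Ia} that makes the argument work, and will require the most care.
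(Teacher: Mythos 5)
Your proposal follows the same broad skeleton as the paper (Nehari manifold level $m=\inf_{\mathcal N}J$, the role of large $C_q$ in forcing the norms of minimizers below the Ozawa threshold, Lions concentration-compactness to rule out vanishing), but it takes a genuinely different route at the decisive compactness step. You work with a $(PS)_c$-sequence and try to prove directly that the weak limit satisfies $J'(u)=0$ by passing to the limit in $J'(u_n)\to 0$ via a Vitali/equi-integrability argument built on the higher integrability $(e^{\alpha u_n^2}-1)\in L^r$ for some $r>1$. The paper instead never shows $J'(u)=0$ directly: it works with a mere minimizing sequence $(u_n)\subset\mathcal N$, proves a Brezis--Lieb splitting
$\int F(u_n)=\int F(u_n-u)+\int F(u)+o(1)$ and the analogous statement for $f(u_n)u_n$ (Lemma~\ref{BL}, the main technical ingredient, built on \cite{BL}), derives from it that $J'(u)u+\liminf_n J'(v_n)v_n=0$ with $v_n=u_n-u$ (Lemma~\ref{Cabo Branco-miramar}), and then runs a two-case analysis (Assertion~\ref{Formosa}) using the unique Nehari projection $\lambda(u)\in(0,1)$ from Lemma~\ref{lemma-1}(b) and the monotonicity of $\mathcal H(s)=sf(s)-2F(s)$ to force $J'(u)u=0$; this places $u\in\mathcal N$, and the conclusion follows from Lemma~\ref{lemma-3} (Fatou applied to $\mathcal H$) together with the Lagrange multiplier fact $(d)$ in Lemma~\ref{lemma-1}. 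Your approach is the more familiar PS-sequence scheme and avoids the Brezis--Lieb machinery, but it shifts the technical burden onto justifying $\int f(u_n)\phi\to\int f(u)\phi$ for test functions, which is exactly the step the paper's Nehari-minimizing-sequence route is designed to sidestep. Two smaller points worth flagging: your ``Fatou gives $I(u)\le\liminf I(u_n)$'' is not Fatou applied to $I$ itself (which has indefinite sign); the intended argument is Fatou applied to $\mathcal H=sf(s)-2F(s)\ge 0$ after writing $I(u_n)-\tfrac12 I'(u_n)u_n=\tfrac12\int\mathcal H(u_n)$, as the paper does. And the threshold you call $c^*\asymp\omega/\alpha_0$ is implemented in the paper in the equivalent but cleaner form $\limsup_n\|u_n\|^2<\rho_0^2$ with $\alpha\rho_0^2<\omega$ in Lemma~\ref{Cabo Branco}(b), which is where the ``large $C_q$'' hypothesis is actually consumed.
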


\noindent
The nonlinearity 
$$
f(s)=\lambda s|s|^{q-2}+|s|^{q-2} s e^{\alpha_0s^2},\quad \text{$q>2$ and $s \in\R$}, 
$$
satisfies all the hypotheses of  Theorem~\ref{Theorem} provided that $\lambda$ is sufficiently large.
More examples of nonlinearities which satisfy the above assumptions can be found in \cite{JMO-exponential}.
In $\mathbb{R}^2$ one can use radial estimates,  then apply, for instance, the Strauss lemma \cite{Strauss}  
to recover some compactness results.  
In $\R$ analogous compactness results fail, but in \cite{Frank}, the authors used  the concentration compactness principle by Lions \cite{willem} for problems with polynomial nonlinearities.
In this paper,  we use the minimization technique over the Nehari manifold in order to get ground state solutions. We adopt some arguments from \cite{AW} combined with those used in \cite{CW,Antonio}. 

\subsubsection{Quick overview of the literature}
In Coti Zelati and Rabinowitz \cite{Coti_Zelati_Rabinowitz} investigated 
\begin{equation}\label{Coti-Rabi}
-\Delta u + V(x) u = f(x,u) \quad \mbox{in $\R^N$}, \quad u\in H^1(\mathbb{R}^N), \quad u>0,
\end{equation}
when $V$ is a strictly positive potential
and $f:\mathbb{R}^N \times \mathbb{R}\rightarrow \mathbb{R}$ is  a periodic function in $x\in \mathbb{R}^N$ and 
$f$ has Sobolev subcritical growth, that is, $f$  behaves at infinity like  
$s^{p}$ with $ 2< p < 2^*-1,$ where $2^*=2N/(N-2)$ is the critical Sobolev exponent, $N\geq 3$.
This was extended or complemented in several ways, see e.g.\  \cite{willem}.
For $ N=2 $ formally $ 2^* \leadsto +\infty,$ but $ H^1(\mathbb{R}^N) \not \hookrightarrow  L^\infty (\mathbb{R}^N).$ Instead, the Trudinger-Moser inequality \cite{M,T} states that $H^1$ is continuously embedded into an Orlicz space defined by the Young function $\phi(t)=e^{\alpha t^2}-1$. In \cite{adimur2,DMR,Def-cpam, LaLu2014}, with the help of Trudinger-Moser embedding, problems in a bounded domain  
were investigated, when the nonlinear term $f$ behaves at infinity like $e^{\alpha s^2}$ for some $\alpha>0$.
We refer the reader to \cite{DOR} for a recently survey on this subject.  
In \cite{CAO1992}  the Trudinger-Moser inequality was extended to the whole $\mathbb{R}^2$ and the authors 
gave some applications to study equations like \eqref{Coti-Rabi} when the nonlinear term has critical growth of Trudinger-Moser type. 
For further results and applications, we would like to mention also \cite{ALDOMI2004,ALSOMO2012,DOMESE2008,RUSA2013} and references therein.
When the potential $V$ is a positive constant and $f(x,s)=f(s)$ for $(x,s) \in \mathbb{R}^N \times \mathbb{R},$  that is the
 autonomous case, the existence of ground states for subcritical nonlinearities was established in  \cite{BELI1983} for $N\geq 3$
 and \cite{BEGAKA1984} for $N=2$ respectively, while in \cite{ALSOMO2012} the critical case for $N\geq 3$ and $N=2$ was treated.
For fractional problem of the form 
\begin{equation}
\label{FP}
(-\Delta)^{s} u + V(x) u =  f(u) \quad\,\, \text{in $\R^N$},
\end{equation}
with $N>2s$ and $ s\in (0,1)$, we refer to \cite{CW,FEQUTA} where positive 
ground states were obtained in subcritical situations. For instance, \cite{CW}  extends the results in 
\cite{BELI1983} to the fractional Laplacian. In \cite{FEQUTA} is obtained regularity and qualitative
 properties of the ground state solution, while in \cite{Simone} a ground state solution is obtained for
 coercive potential. For fractional problems in bounded domains of $\R^N$ with $N>2s$ involving 
critical nonlinearities we cite \cite{barrios,cabretan,Jin,SZY} and  \cite{JMO} for
 the whole space with vanishing potentials.  
 In \cite{Frank} the authors investigated properties of the ground state solutions of 
 $(-\Delta)^{s} u+u=u^p$ in $\R$.
Recently, in \cite{Antonio}, nonlocal problems defined in bounded intervals of the real line involving the square root of the Laplacian and exponential nonlinearities were investigated, using a version of the Trudinger-Moser inequality due to Ozawa \cite{Ozawa}. As it was remarked in \cite{Antonio} the nonlinear problem involving exponential growth with fractional diffusion $(-\Delta)^s$ requires $s=1/2$ and $N=1$. In \cite{JMO-exponential} some nonlocal problems in  $\mathbb{R}$ with vanishing potential, thus providing
compactifying effects, are considered.

\section{Preliminary stuff}
\label{prelim-sect}
\noindent
We recall that
\[
H^{1/2}(\R)=\Big\{u\in L^2(\R): \ \int_{\R^2}\frac{(u(x)-u(y))^2}{|x-y|^2}\ud x\ud y<\infty\Big\},
\]
endowed with the norm
$$
\|u\|=\Big(\|u\|_{L^2}^2+\int_{\R^2}\frac{(u(x)-u(y))^2}{|x-y|^2}\ud x\ud y\Big)^{1/2}.
$$
The square root of the Laplacian, $(-\Delta)^{{1}/{2}},$  of a smooth function $u:\R \to \R$ is defined  by  
$$
{\mathcal F}((-\Delta)^{{1}/{2}}u)(\xi)=|\xi|{\mathcal F}(u)(\xi),
$$ 
where ${\mathcal F}$ denotes the Fourier transform, that is, 
\[
{\mathcal F}(\phi)(\xi)=\frac{1}{\sqrt{2\pi}} \int_{\mathbb{R}} \mathit{e}^{-i \xi \cdot x} \phi (x)  \, \ud x ,  
\]
for functions $\phi$ in  the  Schwartz class.
Also $(-\Delta)^{1/2}u$  can be equivalently represented \cite{nezza} as
$$
(-\Delta)^{1/2} u = -\frac{1}{2\pi}\int_{\R}\frac{u(x+y)+u(x-y)-2 u(x)}{|y|^{2}}\ud y.
$$
Also, in light of \cite[Propostion~3.6]{nezza}, we have
\begin{equation}
\label{equinorm}
\|(-\Delta)^{1/4} u\|^2_{L^2}:=\frac{1}{2\pi}\int_{\R^2}\frac{(u(x)-u(y))^2}{|x-y|^{2}}\ud x \ud y, \quad \text{for all $u\in H^{1/2}(\R)$},
\end{equation}
and, sometimes, we identify these two quantities by omitting the normalization constant $1/2\pi.$
From \cite[(iii) of Theorem 8.5]{LL} we also know that, for any $m\geq 2$,
there exists $C_m>0$ such that
\begin{equation}
\label{emb}
\|u\|_{L^m}\leq C_m\|u\|,
\,\quad\text{for all $u\in H^{1/2}(\R)$}.
\end{equation}

\begin{proposition} \label{finite} The integral 
\begin{equation}\label{New Brunswick}
\int_{\R} (e^{\alpha u^2}-1) \,\ud x
\end{equation}
is finite for any positive $ \alpha $ and $u\in H^{1/2}(\R) $.
\end{proposition}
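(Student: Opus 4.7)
The plan is to reduce the claim to the Ozawa inequality (Theorem~\ref{Ozawa}) by combining a density argument with a pointwise decomposition. The obstacle is twofold: Theorem~\ref{Ozawa} requires $\alpha<\omega$, while here $\alpha>0$ is arbitrary, and it requires $\|(-\Delta)^{1/4}u\|_{L^2}^2\le 1$, whereas we have no size restriction on $u$.

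First I would use the density of $C_c^\infty(\R)$ in $H^{1/2}(\R)$ to pick $\phi\in C_c^\infty(\R)$ such that $v:=u-\phi$ satisfies
\[
\|(-\Delta)^{1/4}v\|_{L^2}^2<\frac{\omega}{4\alpha}.
\]
Rescaling $w:=v/\|(-\Delta)^{1/4}v\|_{L^2}$ so that $\|(-\Delta)^{1/4}w\|_{L^2}=1$, I can rewrite $2\alpha v^2=\beta w^2$ with $\beta:=2\alpha\|(-\Delta)^{1/4}v\|_{L^2}^2<\omega/2<\omega$, and Theorem~\ref{Ozawa} then yields
\[
\int_\R(e^{2\alpha v^2}-1)\,\ud x=\int_\R(e^{\beta w^2}-1)\,\ud x\le H_\beta\,\|w\|_{L^2}^2=\frac{H_\beta\,\|v\|_{L^2}^2}{\|(-\Delta)^{1/4}v\|_{L^2}^2}<\infty.
\]

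Next I would split $\R=\supp(\phi)\cup(\R\setminus\supp(\phi))$. Outside $\supp(\phi)$ one has $u\equiv v$, whence
\[
\int_{\R\setminus\supp(\phi)}(e^{\alpha u^2}-1)\,\ud x\le\int_\R(e^{2\alpha v^2}-1)\,\ud x<\infty.
\]
On the compact set $\supp(\phi)$, using $u^2\le 2v^2+2\phi^2$ together with the boundedness of $\phi$ I estimate
\[
e^{\alpha u^2}-1\le M_\phi(e^{2\alpha v^2}-1)+(M_\phi-1),\qquad M_\phi:=\sup_{\R}e^{2\alpha\phi^2}<\infty,
\]
and integrate, bounding the first term by the global integral already controlled and the second by $(M_\phi-1)\,|\supp(\phi)|<\infty$. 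Summing the two pieces delivers the finiteness of \eqref{New Brunswick}.

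The main step is the first one: arranging, via density, that $\alpha\,\|(-\Delta)^{1/4}v\|_{L^2}^2$ is strictly smaller than $\omega$ so that the subcritical regime of Theorem~\ref{Ozawa} is reachable after rescaling. Once this is secured, the rest is a routine bookkeeping: the density of $C_c^\infty(\R)$ in $H^{1/2}(\R)$ is what allows us to trade the global size constraint built into Theorem~\ref{Ozawa} for a pointwise inequality plus a correction on a set of finite measure coming from the smooth cutoff $\phi$.
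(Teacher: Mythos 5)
Your proof is correct, and it takes a genuinely different route from the paper's main argument. The paper's primary proof works in the Orlicz space framework: it introduces the norm $\|\cdot\|_\phi$ associated with the Young function $\phi(t)=(e^{\alpha_0 t^2}-1)/H_{\alpha_0}$, shows $\|v\|_\phi\le\|v\|$ via Ozawa, picks a smooth compactly supported $\psi_{n_0}$ with $\|\psi_{n_0}-u\|_\phi<1/2$, and then uses convexity of $\phi$ on the decomposition $u=\tfrac12(2u-2\psi_{n_0})+\tfrac12(2\psi_{n_0})$ together with a Krasnosel'ski\u{\i}--Ruticki\u{\i} theorem to conclude. Your argument instead splits $u=v+\phi$ with $\phi\in C_c^\infty$ chosen so that the Gagliardo seminorm of $v$ is small, applies Ozawa after rescaling $v$, and handles the finite-measure set $\supp\phi$ by an explicit pointwise bound $e^{\alpha u^2}\le M_\phi e^{2\alpha v^2}$ coming from $u^2\le 2v^2+2\phi^2$. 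Your version is more elementary and self-contained, bypassing the Orlicz formalism entirely; it is close in spirit to the \emph{alternative} proof the paper sketches in its final lines, which also uses density plus a pointwise estimate, but your decomposition over $\supp\phi$ and its complement is cleaner than the paper's $|e^{\alpha u^2}-e^{\alpha\psi_n^2}|$ bound, which still requires H\"older. One further merit of your argument is that it treats an arbitrary $\alpha>0$ directly by pushing all the largeness into the choice of $\phi$, whereas the paper's Orlicz proof nominally fixes $\alpha_0\in(0,\omega)$ and leaves the reduction from general $\alpha>0$ to $\alpha_0<\omega$ (via rescaling $u\mapsto\sqrt{\alpha/\alpha_0}\,u$) implicit. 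Do note, for completeness, the degenerate case $v\equiv 0$ where the rescaling $w=v/\|(-\Delta)^{1/4}v\|_{L^2}$ is undefined; but then the integral over the complement of $\supp\phi$ vanishes trivially, so the argument closes.
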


\begin{proof} 
Let $\alpha_0\in (0,\omega)$ and consider the convex function defined by
\[
\phi(t)=\frac{e^{\alpha_0 t^2}-1}{H_{\alpha_0}}, \quad t\in\R,
\]
where $H_{\alpha_0}>0$ is defined as in Theorem \ref{Ozawa}. We introduce the Orlicz norm induced by $\phi$ by setting
\[
\|u\|_\phi:=\inf\Big\{\gamma>0 \ : \ \int_\R \phi\Big(\frac{u}{\gamma}\Big)\ud x\leq 1\Big\},
\]
and the corresponding Orlicz space $L_{\phi^*}(0,1)$, see the monograph by Krasnosel'ski\u{\i} $\&$ Ruticki\u{\i} 
\cite[Chapter II, in particular p.78-81]{KR} for properties of this space. 
We claim that $\|v\|_\phi\leq\|v\|$, for all $v\in H^{1/2}(\R)$. Let $v\in H^{1/2}(\R)\setminus\{0\}$ and set $w=\|v\|^{-1}v$, so that 
by formula \eqref{equinorm} we conclude
\begin{equation}
	\label{normless1}
\|(-\Delta)^{\frac{1}{4}}w\|_{L^2}=\frac{1}{(2\pi)^{\frac{1}{2}}\|v\|}\Big(\int_{\R^2}\frac{(v(x)-v(y))^2}{|x-y|^{2}}\ud x \ud y\Big)^{1/2}\leq (2\pi)^{-\frac{1}{2}}<1.
\end{equation}
Therefore, in light of Theorem~\ref{Ozawa}, we have
\[
\int_\R \phi\Big(\frac{v}{\|v\|}\Big)\ud x=\int_\R\frac{e^{\alpha_0 w^2}-1}{H_{\alpha_0}} \ud x\leq\|w\|_{L^2}^2\leq 1,
\]
which proves the claim by the very definition of $\|\cdot\|_\phi$. Fix now an arbitrary function $u\in H^{1/2}(\R)$. Hence, there exists a sequence $(\psi_n)$ in $C^\infty_c(\R)$ such that $\psi_n\to  u$ in $H^{1/2}(\R)$, as $n\to\infty$. By the claim this yields $\|\psi_n-u\|_\phi\to 0$, as $n\to\infty$. 
Fix now $n=n_0$ sufficiently large that $\|\psi_{n_0}-u\|_\phi<1/2$. Then we have, in light of \cite[Theorem 9.15, p.79]{KR}, that 
$$
\int_\R \phi(2 u-2\psi_{n_0}) \ud x\leq \|2 u-2\psi_{n_0}\|_\phi<1.
$$
Finally, writing $u=\frac{1}{2} (2 u-2\psi_{n_0})+\frac{1}{2}(2\psi_{n_0})$,  and since 
$$
\int_\R \phi(2\psi_{n_0}) \ud x=\frac{1}{H_{\alpha_0}}\int_\R (e^{4\alpha_0 \psi_{n_0}^2}-1) \ud x=
\frac{1}{H_{\alpha_0}}\int_{{\rm supt}(\psi_{n_0})} (e^{4\alpha_0 \psi_{n_0}^2}-1) \ud x<\infty,
$$
the convexity of $\phi$ yields $\int_\R \phi(u) \ud x<\infty$. Hence, the assertion 
follows by the arbitrariness of $u$. A different proof can be given writing (in the above notations)
$$
\int_{\R} (e^{\alpha u^2}-1) \,\ud x=\int_{\R} (e^{\alpha \psi_n^2}-1) \,\ud x+\int_{\R} (e^{\alpha u^2}-e^{\alpha \psi_n^2}) \,\ud x,
$$
estimating the right-hand side by
$|e^{\alpha u^2}-e^{\alpha \psi_n^2}|\leq 2\alpha(|\psi_n-u|+|\psi_n|)e^{2\alpha |\psi_n-u|^2}e^{2\alpha |\psi_n|^2}|\psi_n-u|,$
using H\"older inequality, the smallness of $\|\psi_n-u\|$ and Theorem~\ref{Ozawa} to conclude, for $n$ large enough.
\end{proof}

\noindent
Define the functional $J: H^{1/2}(\R) \to \R$ associated with problem \eqref{PS}, given by
\[
J(u)=\frac{1}{2}\int_{\R} ( |  (-\Delta)^{1/4} u |^2 + u^2 ) \, \ud x - \int_{\R} F(u) \, \ud x.
\]
Under our assumptions on $f$, by Proposition~\ref{finite}  we can easily see that 
$J$ is well defined. Also, it is standard to prove that $J$ is a $C^1$ functional and 
\[
J'(u)v=\int_{\R}  (-\Delta)^{1/4} u (-\Delta)^{1/4} v \,\ud x+ \int_{\R} uv \,\ud x - \int_{\R} f(u)v \, \ud x , \quad \forall u, v \in H^{1/2}(\R) .
\]
Thus, the critical points of $J$ are precisely the solutions of \eqref{PS}, namely $u \in H^{1/2}(\R)$ with
$$
\int_{\R}  (-\Delta)^{1/4} u (-\Delta)^{1/4} v \,\ud x+ \int_{\R} uv \,\ud x=\int_{\R} f(u)v \, \ud x , \quad \forall v \in H^{1/2}(\R),
$$
is a (weak) solution to \eqref{PS}.

\begin{lemma}
\label{boundmoser}
Let $u\in H^{1/2}(\R)$ and $\rho_0>0$ be such that  $\|u\|\leq \rho_0$. Then 
$$
\int_{\R} (e^{\alpha u^2}-1) \,\ud x\leq \Lambda(\alpha,\rho_0),\quad\text{for every $0<\alpha \rho_0^2<\omega$};
$$
\end{lemma}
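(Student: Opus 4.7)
The plan is a direct rescaling to reduce the estimate to Ozawa's inequality (Theorem~\ref{Ozawa}). If $u=0$ there is nothing to prove, so I may assume $u\neq 0$. Since $\|u\|\leq\rho_0$, I set $v:=u/\rho_0\in H^{1/2}(\R)$, so that $\|v\|\leq 1$ and $\alpha u^2 = (\alpha\rho_0^2)\,v^2$. Writing $\beta:=\alpha\rho_0^2$, the hypothesis $0<\alpha\rho_0^2<\omega$ becomes exactly $\beta\in(0,\omega)$, which is the range in which Theorem~\ref{Ozawa} provides a constant $H_\beta$.

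Next I check the normalization required by Ozawa, namely $\|(-\Delta)^{1/4}v\|_{L^2}^2\leq 1$. By the identity \eqref{equinorm} and the definition of $\|\cdot\|$, one has
\[
\|(-\Delta)^{1/4}v\|_{L^2}^2=\frac{1}{2\pi}\int_{\R^2}\frac{(v(x)-v(y))^2}{|x-y|^2}\,\ud x\,\ud y\leq\frac{\|v\|^2}{2\pi}\leq\frac{1}{2\pi}<1,
\]
exactly as in \eqref{normless1}. Hence Theorem~\ref{Ozawa} applies with exponent $\beta$, giving
\[
\int_{\R}(e^{\beta v^2}-1)\,\ud x \leq H_\beta\,\|v\|_{L^2}^2\leq H_\beta\,\|v\|^2\leq H_\beta.
\]

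Undoing the scaling, $\beta v^2=\alpha u^2$, so
\[
\int_{\R}(e^{\alpha u^2}-1)\,\ud x=\int_{\R}(e^{\beta v^2}-1)\,\ud x\leq H_\beta=H_{\alpha\rho_0^2},
\]
and the conclusion follows with $\Lambda(\alpha,\rho_0):=H_{\alpha\rho_0^2}$.

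There is no real obstacle here; the only thing to be careful about is the placement of the normalization constant $1/(2\pi)$ in \eqref{equinorm} versus the definition of $\|\cdot\|$, which is why the rescaling by $\rho_0$ (rather than by $\|u\|$) suffices to land in the regime $\|(-\Delta)^{1/4}v\|_{L^2}^2\leq 1$ required by Ozawa's inequality, even without using the factor $2\pi$ to spare.
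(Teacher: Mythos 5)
Your proof is correct and follows essentially the same route as the paper: reduce to Ozawa's inequality (Theorem~\ref{Ozawa}) with parameter $\alpha\rho_0^2<\omega$ by rescaling, using the $1/(2\pi)$ slack in \eqref{equinorm} to verify the normalization $\|(-\Delta)^{1/4}(\cdot)\|_{L^2}^2\leq 1$. The paper rescales by $\|u\|$ (so the exponent bound $\alpha u^2\leq \alpha\rho_0^2(u/\|u\|)^2$ is an inequality using $\|u\|\leq\rho_0$), while you rescale by $\rho_0$ (making $\alpha u^2=\alpha\rho_0^2 v^2$ an identity and also handling $u=0$ cleanly); this is a cosmetic difference and both are valid.
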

\begin{proof}
Let $0<\alpha \rho^2_0<\omega$. Then, by Theorem~\ref{Ozawa}, we have
\begin{equation*}
\int_{\R} \big(e^{\alpha u^2}-1\big) \,\ud x\leq \int_\R \big(e^{\alpha \rho^2_0 \big(\frac{u}{\|u\|}\big)^2}-1\big) \,\ud x 
\leq H_{\alpha \rho^2_0} \frac{\|u\|_{L^2}^2}{\|u\|^2}\leq H_{\alpha \rho^2_0}:=\Lambda(\alpha,\rho_0),
\end{equation*}
since $\|(-\Delta)^{1/4} u\|u\|^{-1}\|^2_{L^2}<1$, see inequality \eqref{normless1}.
\end{proof}

\begin{remark}\rm 
	\label{properts}
From assumptions  $\mathbf{(f1)}$-$\mathbf{(f2)}$ and $ \mathbf{(AR)}$ we see that, for all $s\in\R\setminus\{0\}$,
\begin{align}
s^2f'(s) -sf(s) >  & \,  0,   \label{Mandacaru} \\
f'(s)>&\, 0, &  \label{Manaira} \\
\H(s):=sf(s)-2F(s) > & \,  0,   \label{Tambia}    \\
\mbox{$\H$ is even, and increasing on } &   \R^+ ,  \label{Tambia-1}\\
\H(s) > \H(\lambda s ), \quad \text{for all} &\, \lambda \in (0,1)  \label{Tambia-2}.
\end{align}
\end{remark}

\noindent
Suppose that $u\neq 0$ is a critical point of $J$, that is, $J'(u)=0$, then necessarily $u$ belongs to 
\[
\mathcal{N}:=\left\{ u\in H^{1/2}(\R)\setminus\{0\}: J'(u)u=0 \right\}.
\]
So $\mathcal{N}$ is a natural constraint for the problem of finding nontrivial critical points of $J$. 

\begin{lemma} \label{lemma-1} Under assumptions $\mathbf{(f1)}$-$\mathbf{(f3)}$ and $\mathbf{(AR)}$, $ \mathcal{N} $ satisfies the following properties:
\begin{itemize}
\item[$(a)$] $\mathcal{N}$ is a manifold and $\mathcal{N}\neq \emptyset$.
\item[$(b)$] For $u\in  H^{1/2}(\R)\setminus\{0\}$ with $J'(u)u<0$, there is a unique $\lambda(u)\in (0,1)$ with $\lambda u\in \mathcal{N}$.
\item[$(c)$] There exists $\rho >0$ such that $\|u\|\geq \rho$ for any $u\in \mathcal{N}$.
\item[$(d)$] If $ u \in \mathcal{N} $ is a constrained critical point of $J|_\mathcal{N}$, then $J'(u)=0$ and $u$ solves \eqref{PS}.
\item[$(e)$] $m=\inf_{u\in \mathcal{N}} J(u) >0$.
\end{itemize}
\end{lemma}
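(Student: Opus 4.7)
The plan is to treat $\mathcal{N}$ as the zero set of the $C^1$ functional $G(u):=J'(u)u=\|u\|^2-\int_\R f(u)u\,\ud x$ and to exploit the monotonicity of $s\mapsto f(s)/s$ from $\mathbf{(f2)}$ together with $\mathbf{(AR)}$. The strategy for the five items is as follows.

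For $(a)$, I would compute
\[
G'(u)u=2\|u\|^2-\int_\R\bigl(f'(u)u^2+f(u)u\bigr)\ud x,
\]
and, on $\mathcal{N}$, substitute $\|u\|^2=\int_\R f(u)u\,\ud x$ to reduce this to $-\int_\R(f'(u)u^2-f(u)u)\,\ud x$, which is strictly negative by \eqref{Mandacaru}. Hence $0$ is a regular value of $G$ and $\mathcal{N}$ is a codimension-one $C^1$ manifold. Non-emptiness follows by fixing any $u\neq 0$ and studying $\gamma(t):=J(tu)$: by $\mathbf{(f1)}$ one has $\gamma(t)>0$ for small $t>0$, while $\mathbf{(AR)}$ forces $F(s)\geq C|s|^\theta$ for large $|s|$ (with $\theta>2$), so $\gamma(t)\to-\infty$ as $t\to+\infty$. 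Any maximizer $t_*>0$ satisfies $\gamma'(t_*)=0$, i.e.\ $t_*u\in\mathcal{N}$.

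For $(b)$, given $u\neq 0$ with $J'(u)u<0$, set $\psi(\lambda):=\int_\R \frac{f(\lambda u)}{\lambda u}u^2\,\ud x$ for $\lambda>0$ (with the integrand understood to vanish where $u=0$). By $\mathbf{(f2)}$ applied pointwise, $\psi$ is strictly increasing; by $\mathbf{(f1)}$, $\psi(\lambda)\to 0$ as $\lambda\to 0^+$, while $\psi(1)>\|u\|^2$ by the assumption $J'(u)u<0$. Thus there is a unique $\lambda(u)\in(0,1)$ with $\psi(\lambda(u))=\|u\|^2$, equivalently $J'(\lambda(u)u)(\lambda(u)u)=0$.

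For $(c)$, from the $\alpha_0$-critical growth and $\mathbf{(f1)}$ I would derive, for any $\varepsilon>0$, any $p>2$ and any $\alpha>\alpha_0$, a pointwise bound of the form $f(s)s\leq\varepsilon s^2+C_\varepsilon(e^{\alpha s^2}-1)|s|^p$. For $u\in\mathcal{N}$ the identity $\|u\|^2=\int_\R f(u)u\,\ud x$, combined with H\"older's inequality with exponent $r>1$ close to $1$, the elementary inequality $(e^x-1)^r\leq e^{rx}-1$ for $x\geq 0$, Lemma~\ref{boundmoser} (applied with $\alpha r\|u\|^2<\omega$, which requires smallness of $\|u\|$) and the embedding \eqref{emb}, yields $\|u\|^2\leq\varepsilon\|u\|^2+C\|u\|^p$. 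Choosing $\varepsilon<1$ forces $\|u\|\geq\rho$ for a universal $\rho>0$ (if $\|u\|$ is not already small, the bound is automatic). For $(d)$, a standard Lagrange multiplier argument yields $J'(u)=\mu G'(u)$ for some $\mu\in\R$; testing with $u$ gives $0=J'(u)u=\mu G'(u)u$, and since $G'(u)u\neq 0$ by $(a)$, $\mu=0$ and $u$ solves \eqref{PS}. Finally, for $(e)$, the Nehari identity together with $\mathbf{(AR)}$ gives
\[
J(u)=J(u)-\tfrac{1}{\theta}J'(u)u=\Bigl(\tfrac{1}{2}-\tfrac{1}{\theta}\Bigr)\|u\|^2+\int_\R\bigl(\tfrac{1}{\theta}f(u)u-F(u)\bigr)\ud x\geq\Bigl(\tfrac{1}{2}-\tfrac{1}{\theta}\Bigr)\rho^2>0,
\]
so $m\geq\bigl(\tfrac{1}{2}-\tfrac{1}{\theta}\bigr)\rho^2>0$.

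The main technical obstacle is item $(c)$: because $f$ has exponential critical growth, the standard subcritical argument fails and one must restrict a priori to the regime where $\|u\|$ is small in order to invoke Lemma~\ref{boundmoser}; the quantitative bound $\alpha r\|u\|^2<\omega$ is what allows the exponential term to be absorbed and thereby produces the uniform lower bound $\rho$.
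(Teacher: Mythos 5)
Your proof follows the same route as the paper: realize $\mathcal{N}$ as the zero set of $\Phi(u)=J'(u)u$ with $0$ a regular value via \eqref{Mandacaru}; exploit the shape of the fiber map $t\mapsto J(tu)$ together with the monotonicity of $s\mapsto f(s)/s$ for non-emptiness and uniqueness in $(b)$; combine the Moser-type pointwise estimate, H\"older, Lemma~\ref{boundmoser} and \eqref{emb} on the Nehari identity to obtain $(c)$ by contradiction; use Lagrange multipliers and $\Phi'(u)u<0$ for $(d)$; and conclude $(e)$ from the $\mathbf{(AR)}$ identity together with $(c)$. The only thing you state a bit too quickly is that $\mathbf{(f1)}$ alone gives $J(tu)>0$ for small $t$: in the exponential-critical regime one must also control the exponential part of $F$ (the paper bounds $F(s)\le\varepsilon[s^2+s^4(e^{\alpha s^2}-1)]+C_\varepsilon s^4$ and invokes Lemma~\ref{boundmoser}), but you clearly know how to do this since you carry out exactly that estimate in your treatment of $(c)$.
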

\begin{proof} Consider the $C^1$-functional $\Phi : H^{1/2}(\R)\setminus\{0\} \rightarrow \R$ defined by
\[
\Phi(u)=J'(u)u=\|u\|^2-\int_{\R} f(u)u \, \ud x.
\]
Note that $\mathcal{N}=\Phi^{-1}(0)$ and $\Phi'(u)u<0$, if $u\in {\mathcal N}$. Indeed, if $u\in {\mathcal N}$, then
\begin{equation*}
\Phi'(u)u =\int_{\R} \left( f(u)u -  f'(u)u^2 \right)  \ud x <0,
\end{equation*}
where we have used \eqref{Mandacaru}.  Then $ c=0 $ is regular value of $ \Phi $ and consequently $ \mathcal{N}=\Phi^{-1}(0) $ is a $C^1$-manifold,
proving $(a)$. Now we prove $\mathcal{N} \not = \emptyset$ and that $(b)$ holds. Fix $u \in H^{1/2}(\R)\setminus\{0\}$ 
and  consider the function $\Psi:\R^+\rightarrow \R$, 
\[
\Psi(t)=\frac{t^2}{2}\|u\|^2 - \int_{\R} F(tu) \, \ud x.
\]
Then $\Psi'(t)=0$ if and only if $tu\in {\mathcal N}$, in which case it holds
\begin{equation}
\label{akai}
\|u\|^2=\int_{\R} \frac{f(tu)}{t} u \, \ud x.
\end{equation}
In light of \eqref{Mandacaru} the function on the right-hand side of \eqref{akai} is increasing.
Whence, it follows that a critical point of $\Psi$, if it exists, it is {\em unique}.
Now,
there exist $\delta>0$ and $R  >0$ such that 
$$
\Psi(t) >  0 \quad \mbox{if $t \in (0, \delta)$}\quad \mbox{and} \quad
\Psi(t) <  0 \quad \mbox{if $t \in (R, \infty)$}.
$$
In fact, by virtue of $\mathbf{(f3)}$, there exist $C,C'>0$ such that
$$
\Psi(t)=\frac{t^2}{2}\|u\|^2 - \int_{\R} F(tu) \, \ud x\leq Ct^2-C' t^q<0,
$$
provided that $t>0$ is chosen large enough. 
Using $\mathbf{(f1)}$ and the fact that $f$ has $\alpha_0$-Trudinger-Moser critical growth at $+\infty$, 
for some $\alpha\in (\alpha_0,\omega)$ and for any $\eps>0$,  there exists $C_\eps>0$ such that 
$$
F(s) \leq \varepsilon [s^2+s^4(e^{\alpha s^2}-1)]+C_{\varepsilon} s^4, \quad s\in\R.
$$
Then, for any $u\in H^{1/2}(\R)\setminus\{0\}$,
$$
\Psi(t) \geq \frac{t^2}{2}\|u\|^2- \varepsilon t^2\|u\|_{L^2}^2
-C_{\varepsilon} t^4\|u\|_{L^4}^4-\varepsilon t^4\int_{\mathbb{R}}u^4(e^{\alpha (tu)^2}-1)\ud x.
$$
For $0<t<\tau<(\omega/(2\alpha\|u\|^2))^{1/2}$, by Lemma~\ref{boundmoser} and \eqref{emb}, there is $C=C(\|u\|,\alpha)>0$ such that 
$$
\int_{\mathbb{R}}u^4(e^{\alpha(tu)^2}-1)\leq \|u\|_{L^8}^4\Big(\int_{\mathbb{R}}e^{2\alpha \tau^2u^2}-1\Big)^{1/2}\leq C.
$$
Then for some $B,B'>0$, we have 
$$
\Psi(t)\ge Bt^2-B' t^4>0,\quad \mbox{for $t>0$ small enough}.
$$
Thus, we conclude that there exists a unique maximum $t_0=t_0(u)>0$ such that $t_0u\in \mathcal{N}$, and consequently $\mathcal{N}$ is a nonempty set. Given $u\in  H^{1/2}(\R)\setminus\{0\}$ with $J'(u)u<0$, we have 
$$
\Psi'(1)=\|u\|^2 - \int_{\R} f(u) u \, \ud x =J'(u)u  <0,
$$
which implies $t_0<1$. 
Let us prove (c).\
Let $\alpha\in (\alpha_0,\omega)$ and $\rho_0>0$ with
$\alpha \rho^2_0<\omega$. By the growth conditions on $f$, there exists
$r>1$ so close to $1$ that $r\alpha\rho_0^2 <\omega$, $\ell>2$ and $C>0$ with
$$
f(s)s\leq \frac{1}{4}s^2+C(e^{r\alpha s^2}-1)^{1/r}|s|^\ell,\quad \text{for all $s\in\R$}.
$$
Let now $u\in {\mathcal N}$ with $\|u\|\leq\rho\leq\rho_0$. 
Then, by Lemma~\ref{boundmoser} and \eqref{emb}, we have for  $u\in {\mathcal N}$
\begin{align}
\label{controllo-picc}
0&=\Phi(u) \geq \|u\|^2-\frac{1}{4}\|u\|_{L^2}^2-C\int_{\R}\big(e^{r\alpha u^2}-1\big)^{1/r}|u|^\ell \,\ud x  \notag\\
& \geq \frac{3}{4}\|u\|^2-C\Big(\int_{\R}\big(e^{r\alpha u^2}-1\big)\,\ud x\Big)^{1/r}  \Big(\int_\R |u|^{r'\ell} \,\ud x)^{1/r'} \\
& \geq \frac{3}{4}\|u\|^2-C\|u\|^\ell, \notag
\end{align}
which yields
$$
0<\hat\rho:=\big(\frac{3}{4C}\big)^{1/(\ell-2)}\leq \|u\|\leq \rho,
$$
a contradiction if $\rho<\min\{\hat\rho,\rho_0\}$.\ Then $u\in {\mathcal N}$ implies $\|u\|\geq\min\{\hat\rho,\rho_0\}$, proving (c).\
Concerning (d), if $u\in {\mathcal N}$ is a minimizer, then $J'(u)=\lambda\Phi'(u)$
for some $\lambda\in\R$. Testing with $u$ and recalling the previous conclusions yields $\lambda=0,$ hence the assertion.
Finally, assertion (e) follows by condition $\mathbf{(AR)}$ and (c), since $u\in {\mathcal N}$ implies $J(u)\geq (1/2-1/\vartheta)\|u\|^2
\geq (1/2-1/\vartheta)\rho^2>0$.
\end{proof}

\begin{lemma}\label{Cabo Branco} Let $(u_n)\subset  \mathcal{N}$  be a minimizing sequence for $J$ on $\mathcal{N}$, that is, 
\begin{equation}\label{Rosa}
J'(u_n)u_n=0 \quad \mbox{and}\quad J(u_n)\rightarrow m:=\inf_{u\in \mathcal{N}} J(u) \quad \mbox{as $n\to\infty$},
\end{equation}
then the following facts hold
\begin{itemize}
\item[$(a)$]  $(u_n)$ is bounded in $H^{1/2}(\R)$. Thus, up to a subsequence, 
$u_n\rightharpoonup u$ weakly in $H^{1/2}(\R)$. 
\item[$(b)$] $\limsup_{n} \|u_n\| < \rho_0$, for some $\rho_0>0$ sufficiently small.
\item[$(c)$] $(u_n)$ does not converge strongly to zero in $L^\sigma(\R)$, for some $\sigma>2$.
\end{itemize}
\end{lemma}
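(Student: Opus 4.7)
The plan splits naturally into the three assertions, each of which relies on a different piece of the variational setup.

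For (a) I would run the classical Ambrosetti--Rabinowitz computation. Since $u_n\in\mathcal N$ means $J'(u_n)u_n=0$, writing
\[
J(u_n)=J(u_n)-\frac{1}{\vartheta}J'(u_n)u_n=\Big(\frac12-\frac{1}{\vartheta}\Big)\|u_n\|^2+\frac{1}{\vartheta}\int_{\R}\bigl[f(u_n)u_n-\vartheta F(u_n)\bigr]\,\ud x
\]
and invoking $\mathbf{(AR)}$ gives $\|u_n\|^2\leq \frac{2\vartheta}{\vartheta-2}J(u_n)$, so $(u_n)$ is bounded in $H^{1/2}(\R)$ and reflexivity yields a weakly convergent subsequence.

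For (b) the key observation is that $m$ can be driven arbitrarily small by enlarging $C_q$. Fix any $v\in H^{1/2}(\R)\setminus\{0\}$; by Lemma~\ref{lemma-1}(b) there is a unique $t_v>0$ with $t_v v\in\mathcal N$, and $t_v$ is the maximum point of $\Psi$. Using $\mathbf{(f3)}$ and maximizing the resulting upper bound in $t>0$,
\[
m\leq J(t_v v)=\Psi(t_v)\leq\max_{t>0}\Big(\frac{t^2}{2}\|v\|^2-C_q t^q\|v\|_{L^q}^q\Big)=\Big(\frac12-\frac1q\Big)\|v\|^2\Big(\frac{\|v\|^2}{qC_q\|v\|_{L^q}^q}\Big)^{\!\frac{2}{q-2}},
\]
which tends to $0$ as $C_q\to\infty$. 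Combining with (a), $\limsup_n\|u_n\|^2\leq\frac{2\vartheta}{\vartheta-2}m$, so prescribing $\rho_0>0$ small with $\alpha_0\rho_0^2<\omega$ and taking $C_q$ sufficiently large ensures $\limsup_n\|u_n\|<\rho_0$.

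For (c) I argue by contradiction. Suppose $u_n\to 0$ in $L^\sigma(\R)$ for every $\sigma>2$ (it suffices to assume this for a single $\sigma_0>2$, since interpolation with the $L^2$-bound coming from (a) then propagates the convergence to every $\sigma>2$). Using $\mathbf{(f1)}$ and the $\alpha_0$-critical growth of $f$, for any $\eps>0$ there exist $r>1$ close to $1$, $\alpha>\alpha_0$ close to $\alpha_0$, $\ell>2$ and $C_\eps>0$ with
\[
f(s)s\leq \eps s^2+C_\eps(e^{r\alpha s^2}-1)^{1/r}|s|^\ell,\qquad s\in\R.
\]
By part (b) I may choose the parameters so that $r\alpha\rho_0^2<\omega$; then Lemma~\ref{boundmoser} provides a uniform bound on $\int_\R(e^{r\alpha u_n^2}-1)\,\ud x$, and H\"older's inequality gives
\[
\int_\R f(u_n)u_n\,\ud x\leq \eps\|u_n\|_{L^2}^2+C\,C_\eps\|u_n\|_{L^{r'\ell}}^\ell.
\]
The second summand vanishes and $\eps$ is arbitrary, so $\int_\R f(u_n)u_n\,\ud x\to 0$; but $u_n\in\mathcal N$ then forces $\|u_n\|\to 0$, contradicting Lemma~\ref{lemma-1}(c).

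The main obstacle is (b): one must tie the smallness of the ground-state level $m$ (driven by the largeness of $C_q$ in $\mathbf{(f3)}$, and providing the quantitative content of the hypothesis in Theorem~\ref{Theorem}) to the threshold $\rho_0$ dictated by Ozawa's inequality, since that very threshold controls the admissible exponents in the Moser--Trudinger tail estimate used crucially in (c). Once (b) is calibrated against (c), parts (a) and (c) are standard growth--and--compactness arguments.
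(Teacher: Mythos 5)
Your argument is correct and matches the paper's proof in all essentials: part (a) is the standard Ambrosetti--Rabinowitz bound, part (b) maximizes $J(t\psi)$ via $\mathbf{(f3)}$ to drive $m$ (and hence $\limsup_n\|u_n\|$) below the Ozawa threshold as $C_q\to\infty$, and part (c) reproduces the Moser--Trudinger tail estimate that forces a uniform lower bound on $\|u_n\|_{L^{r'\ell}}$, merely phrased as a proof by contradiction rather than the paper's direct inequality $(1-\eps)\rho^2\leq C\|u_n\|_{L^{r'\ell}}^\ell$. The only cosmetic difference in (b) is that the paper packages the bound through the infimum $\S_q$ over all test functions before invoking largeness of $C_q$, which, as you observe, is not needed; and the parenthetical interpolation remark in (c), while inessential, should appeal to the full $L^p$-boundedness for all $p\geq 2$ coming from \eqref{emb} and part (a), not just the $L^2$-bound, in order to propagate convergence to exponents $\sigma>\sigma_0$.
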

\begin{proof} Let $(u_n)\subset H^{1/2}(\R)$  satisfying \eqref{Rosa}. 
Using $\mathbf{(AR)}$ condition, we have for $\vartheta>2$,
\begin{equation}
\label{est-m}
m+\mathit{o}(1)=  J(u_n)\geq  \, \frac{\|u_n\|^2}{2}-\frac{1}{\theta}\int_{\mathbb{R}}  f(u_n)u_n  \, \ud x
=  \Big(\frac{1}{2}-\frac{1}{\theta}\Big)\|u_n\|^2,
\end{equation}
which implies (a).
To prove (b) we use assumption $\mathbf{(f3)}$
and the fact that, by \eqref{emb},
\begin{equation}
\label{Caja}
\S_q := \inf_{v\in H^{1/2}(\R)\setminus\{0\}}\S_q(v)>0,\qquad \S_q(v)=\frac{\|v\|}{\|v\|_{L^q}}.
\end{equation}
Let $(u_n)\subset  \mathcal{N}$ and $u\in \mathcal{N}$ satisfying \eqref{Rosa}. Then inequality
\eqref{est-m} yields
\begin{equation}\label{Araca}
\limsup_n \|u_n\|^2 \leq \frac{2\theta}{\theta-2}m.
\end{equation}
Notice that, for every $v\in H^{1/2}(\R)\setminus\{0\}$, arguing as for the proof of (b) of Lemma~\ref{lemma-1},
one finds $t_0>0$ such that $t_0 v \in \mathcal{N}$. Hence
\[
m  \leq J(t_0v) \leq \max_{t\geq 0} J(t v).
\]
Now, using assumption $\mathbf{(f3)}$ and formula \eqref{Caja}, for every 
$\psi\in H^{1/2}(\R)\setminus\{0\}$, we can estimate 
\[
\begin{aligned}
m \leq & \max_{t\geq 0} J(t \psi) 
\leq  \max_{t\geq 0} \left( \frac{t^2}{2} \|\psi\|^2 - C_q t^q \|\psi\|^q_{L^q} \right)\\
\leq & \max_{t\geq 0} \left( \frac{\S_q(\psi)^2}{2} t^2 \|\psi\|^2_{L^q} - C_q t^q \|\psi\|^q_{L^q} \right)\\
= & \left( \frac{1}{2} -\frac{1}{q} \right) \frac{\S_q(\psi)^{2q/(q-2)}}{(qC_q)^{2/(q-2)}},
\end{aligned}
\]
which together with \eqref{Araca} implies that 
\begin{equation*}
\limsup_n \|u_n\|^2 \leq \frac{2\theta}{\theta-2} \left( \frac{1}{2} -\frac{1}{q} \right) \frac{\S_q(\psi)^{2q/(q-2)}}{(qC_q)^{2/(q-2)}}.
\end{equation*}
Taking the infimum over $\psi\in H^{1/2}(\R)\setminus\{0\}$, we get
\begin{equation}\label{Umbu}
\limsup_n \|u_n\|^2 \leq \frac{\theta}{\theta-2} \frac{q-2}{q}  \frac{\S_q^{2q/(q-2)}}{(qC_q)^{2/(q-2)}}<\rho_0^2,
\end{equation}
provided $C_q$ is large enough, proving (b).  Let us prove (c). By Lemma~\ref{lemma-1} (part (c)) we have
\[
\|u_n\|^2=\int_{\mathbb{R}}f(u_n)u_n \, \ud x \geq \rho^2>0.
\]
In view of assertion (b) the norm $\|u_n\|$ is small (precisely, we can
 assumed that $r\alpha\|u_n\|^2< r\alpha \rho_0^2<\omega$ for $r$ very close to $1$).
Arguing as in the proof of \eqref{controllo-picc},  we can find $\eps\in (0,1)$ and $C>0$ such that 
\begin{align*}
\|u_n\|^2=\int_{\mathbb{R}}f(u_n)u_n \, \ud x &\leq \eps\|u_n\|_{L^2}^2+C\int_{\R}\big(e^{r\alpha u_n^2}-1\big)^{1/r}|u_n|^\ell \,\ud x  \notag\\
& \leq \eps\|u_n\|^2+C\Big(\int_{\R}\big(e^{r\alpha u_n^2}-1\big)\,\ud x\Big)^{1/r}  \|u_n\|_{L^{r'\ell}}^\ell  \\
&\leq \varepsilon \|u_n\|^2 + C\|u_n\|_{L^{r'\ell}}^\ell,
\end{align*}
which implies 
\[
0<(1-\varepsilon)\rho^2\leq (1-\varepsilon)\|u_n\|^2 \leq  C\|u_n\|_{L^{r'\ell}}^\ell,
\]
and, consequently, $(u_n)$ cannot vanish in $L^{r'\ell}(\R)$, as $n\to\infty$.
This concludes the proof.
\end{proof}


\noindent
Next, we formulate a Brezis-Lieb type lemma in our framework.

\begin{lemma}
\label{BL}  
Let $(u_n)\subset  H^{1/2}(\R)$  be a sequence such that $u_n\rightharpoonup u$ weakly in $H^{1/2}(\R)$
and $\|u_n\|<\rho_0$ with $\rho_0>0$ small.
Then,  as $n\to\infty$, we have 
\begin{align*}
\int_{\mathbb{R}}f(u_n)u_n  \, \ud x &= \int_{\mathbb{R}}f(u_n-u)(u_n-u)  \, \ud x+\int_{\mathbb{R}}f(u)u  \, \ud x+o(1),  \\
\int_{\mathbb{R}} F(u_n)\, \ud x &=\int_{\mathbb{R}} F(u_n-u)\, \ud x+\int_{\mathbb{R}} F(u)  \, \ud x+o(1).
\end{align*}
\end{lemma}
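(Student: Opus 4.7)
My plan is to adapt the classical Brezis-Lieb strategy to the exponential setting, leveraging the smallness hypothesis $\|u_n\|<\rho_0$ to make the Trudinger-Moser tools (Lemma~\ref{boundmoser}) applicable with constants uniform in $n$.

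First I would pass to a subsequence so that, by the compact embedding of $H^{1/2}(\R)$ into $L^p_{\mathrm{loc}}(\R)$ for every $p\in[2,\infty)$, $u_n\to u$ almost everywhere on $\R$ and strongly on bounded subsets. Writing $v_n:=u_n-u$, the Hilbert-space decomposition $\|v_n\|^2=\|u_n\|^2-\|u\|^2+o(1)$, a consequence of $u_n\rightharpoonup u$, gives $\limsup_n\|v_n\|\leq\rho_0$. Hence, taking $\rho_0$ small and $r>1$ close to $1$ with $r\alpha\rho_0^2<\omega$, Lemma~\ref{boundmoser} supplies a bound $\int_\R(e^{r\alpha w_n^2}-1)\,\ud x\leq\Lambda$ uniform in $n$, whether $w_n=u_n$ or $w_n=v_n$, and the same holds for $w=u$.

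The core of the proof is the algebraic identity
\[
f(u_n)u_n-f(v_n)v_n-f(u)u \;=\; [f(u_n)-f(u)]\,u \;+\; [f(v_n+u)-f(v_n)]\,v_n,
\]
in which the second bracket equals $u\int_0^1 f'(v_n+tu)\,\ud t$ by the fundamental theorem of calculus, so both summands carry the fixed function $u$ as an explicit factor. The analogous identity
\[
F(u_n)-F(v_n)-F(u) \;=\; u\int_0^1 \bigl[f(v_n+tu)-f(tu)\bigr]\,\ud t
\]
handles the second statement. Denoting either integrand $A_n(x)$, I would prove $\int_\R|A_n|\,\ud x\to 0$ by splitting into $\{|x|\leq R\}$ and $\{|x|>R\}$. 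On $\{|x|\leq R\}$: the growth bound $|f(s)|\leq\varepsilon|s|+C_\varepsilon|s|^{q-1}(e^{\alpha s^2}-1)$ (for some $q>2$, and an analogous one for $f'$), together with H\"older's inequality, the pointwise estimate $(e^{\alpha s^2}-1)^r\leq e^{r\alpha s^2}-1$, and Lemma~\ref{boundmoser}, yields uniform integrability of $\{f(u_n)u_n\}$, $\{f(v_n)v_n\}$ and $\{F(u_n)\}$ on the bounded set; combined with $u_n\to u$ a.e.\ and Vitali's theorem this gives $\int_{|x|\leq R}|A_n|\,\ud x\to 0$ as $n\to\infty$. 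On $\{|x|>R\}$: the explicit factor $u$ in $A_n$, together with H\"older and the same Trudinger-Moser bound, produces an estimate of the form $\int_{|x|>R}|A_n|\,\ud x\leq C\,\|u\|_{L^{s}(|x|>R)}$ for some $s<\infty$, which is $o(1)$ as $R\to\infty$ uniformly in $n$ because $u$ is fixed in $L^s(\R)$.

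The main obstacle is precisely this tail estimate. The sequences $\{f(u_n)u_n\}$ and $\{F(u_n)\}$ need not be tight on $\R$ -- mass can escape to infinity along $u_n$ -- so a direct application of Vitali's theorem on the whole line is unavailable. The algebraic rearrangement that factors $u$ out of every difference is the heart of the argument: once accomplished, the uniform exponential bound coming from the smallness of $\|u_n\|$ and the vanishing $L^s$-tails of the fixed function $u$ can be combined to conclude.
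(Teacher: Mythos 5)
Your proposal is correct but follows a genuinely different route than the paper. The paper invokes the abstract Brezis--Lieb machinery for convex integrands ([BL, Lemma~3 and Theorem~2]): writing $j=F$ or $j=G$ with $G(s)=f(s)s$, it exploits the convexity of these functions (coming from \textbf{(f1)}) to build the auxiliary pair $\phi_\epsilon(s)=j(ks)-kj(s)$, $\psi_\epsilon(s)=|j(C_\epsilon s)|+|j(-C_\epsilon s)|$ satisfying $|j(a+b)-j(a)|\le\epsilon\phi_\epsilon(a)+\psi_\epsilon(b)$, and then verifies the four integrability conditions of [BL, Theorem~2]. The substantive work there is the uniform bound $\sup_n\int_\R f(w_n)v_n\,\ud x<\infty$ and its companion with $f'(w_n)w_n$, obtained via the Mean Value Theorem and the smallness hypothesis $\|u_n\|<\rho_0$ together with Lemma~\ref{boundmoser}; the conclusion then falls out of the black-boxed theorem. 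You instead bypass [BL] entirely: the algebraic identity $f(u_n)u_n-f(v_n)v_n-f(u)u=[f(u_n)-f(u)]u+[f(u_n)-f(v_n)]v_n$ (and its integral analogue for $F$) makes the fixed profile $u$ an explicit factor in every difference, which is precisely what yields both uniform integrability on balls (hence Vitali applies, using $u_n\to u$ a.e.) and uniform-in-$n$ tail decay $\int_{|x|>R}|A_n|\,\ud x\le C\|u\|_{L^s(|x|>R)}=o_R(1)$. Both strategies lean on the same estimates -- smallness of $\|u_n\|$ plus Lemma~\ref{boundmoser} to control the exponential factors through H\"older -- but your version is self-contained, does not use the convexity of $F$ and $G$ at all (only smoothness and the growth bounds), and makes the role of tightness visible; the paper's version is shorter once [BL] is granted, at the cost of importing the convexity hypothesis into the verification.

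One point worth tightening in a final write-up: the uniform integrability of the whole sequence $\{A_n\}$ on $\{|x|\le R\}$ is what Vitali actually needs, and it is cleaner to deduce it directly from the factored form $\int_E|A_n|\le C\|u\|_{L^s(E)}$ (valid for any measurable $E$, not just tails) rather than from uniform integrability of $\{f(u_n)u_n\}$, $\{f(v_n)v_n\}$, $\{F(u_n)\}$ separately; the exponent bookkeeping in the H\"older step also requires choosing the conjugate exponents so that the powers multiplying $\alpha$ stay below $\omega/\rho_0^2$, which is where the ``$\rho_0$ small'' hypothesis is actually consumed.
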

\begin{proof}
We shall apply \cite[Lemma 3 and Theorem 2]{BL}.\  Since $f$ is convex on $\R^+$ and by the properties 
collected in Remark \ref{properts},  we have that the functions
$F(s)$ and $ G(s):=f(s)s$ are convex on $\R$ with $F(0)=G(0)=0.$
We let $\alpha\in(\alpha_0,\omega)$ and  $\rho_0\in (0,1/2)$ with $\alpha \rho^{2}_0< \omega$. Thus, by 
Lemma \ref{boundmoser}, we have
\begin{equation}\label{limite}
\sup_{n\in\N} \int_{\R} \big( e^{\alpha u^{2}_{n}}-1 \big) \ud x < \infty.
 \end{equation}
Choose $k\in (1,\frac{1-\rho_0}{\rho_0})$ and let $\eps>0$ with $\varepsilon<1/k$. Then, in light of \cite[Lemma 3]{BL}, the functions
$$
\phi_{\epsilon}(s):=j(ks)-kj(s)\geq 0
\,\,\,\quad 
\psi_{\epsilon}(s):=|j( C_{\epsilon}s)|+|j(- C_{\epsilon}s)|,\quad s\in\R, \quad C_{\varepsilon}=\frac{1}{\varepsilon(k-1)},
$$
satisfy the inequality
\begin{equation}\label{(3)}
|j(a+b)-j(a)|\leq \epsilon \phi_{\epsilon}(a) + \psi_{\epsilon}(b), \quad \forall a, b \in \R, 
\end{equation}
and, if $v_n:=u_n -u$ and $u_n$ satisfying \eqref{limite}, we claim that 
\begin{itemize}
\item [(i)] $v_n\rightarrow 0$ almost everywhere;
\item [(ii)]$j(u) \in L^1(\R);$ 
\item [(iii)] $\int_{\R} \phi_{\epsilon}(v_n) dx \leq C$ for some constant $C$ independent of $n\geq 1$;
\item [(iv)] $\int_{\R} \psi_{\epsilon}(u) dx < \infty,$ \,\, for all $\epsilon>0$ small.
\end{itemize}
Assuming this claim, then, by \cite[Theorem 2]{BL}, it holds 
\begin{equation}
\label{bn}
\lim_n\int_{\R}|j(u_n)-j(v_n)-j(u)| dx=0,
\end{equation}
with $j=F$ and with $j=G.$ 
Next we are going to prove the claim. Item (i) follows by the week convergence of $(u_n)$. 
To prove (ii) it is enough to use Proposition~\ref{finite} (see the growth conditions below). To check (iii) for $j=F$ and $j=G$, 
we find $\alpha\in(\alpha_0,\omega)$, $D>0$ and $q>2$ such that 
\begin{align}
F(s) \leq \;&  \big(s^2 + e^{\alpha  s^2}-1 \big) +D|s|^{q}, \quad\text{for all  $s\in \R$},  \label{cresceF} \\
G(s) \leq \; &  \big(s^2 + e^{\alpha  s^2}-1 \big) +D|s|^{q}, \quad \text{for all  $s\in \R$}, \label{cresceG} \\
|f(s)| \leq \; &  \big(s + e^{\alpha  s^2}-1 \big) +D|s|^{q-1}, \quad \text{for all  $s\in \R$},  \label{cresceFa} \\
|f'(s)s| \leq \; &  \big(s + e^{\alpha  s^2}-1 \big) +D|s|^{q-1}, \quad \text{for all  $s\in \R$}.  \label{cresceFb}
\end{align}
We claim that $\phi_{\epsilon}(v_n)$ verifies (iii). First let us consider the case $j=F$, that is, 
$$
\phi_{\epsilon}(v_n)=F(kv_n)-kF(v_n).
$$ 
In fact, by the Mean Value Theorem, there exists $\theta \in (0,1)$ with $w_n=v_n(k(1-\theta)+\theta)$ 
such that
\begin{align*} 
\phi_{\epsilon}(v_n)&=F(kv_n)-F(v_n)+F(v_n)-kF(v_n)\\
&= f(w_n)v_n (k-1) + (1-k)F(v_n)\leq f(w_n)v_n (k-1),
\end{align*}
since $k>1$ and $F\geq 0$. Analogously, for $j=G$, we have 
\begin{align*} 
\phi_{\epsilon}(v_n)&=G(kv_n)-G(v_n)+G(v_n)-kG(v_n)\\
&=f'(w_n)w_n v_n (k-1) + f(w_n)v_n(k-1)+(1-k)f(v_n)v_n \\
&\leq f'(w_n)w_n v_n (k-1) + f(w_n)v_n(k-1),
\end{align*}
since $k>1$ and $f(s)s\geq 0$ for all $s\in\R$.
Thus, to prove (iii) for $F$ and $G$, it is sufficient to see that
\begin{equation}
\label{2tt}
\sup_{n\in\N}\int_{\R}f(w_n)v_n  \ud x <\infty, 
\quad \sup_{n\in\N}\int_{\R}f'(w_n)w_n v_n  \ud x <\infty.
\end{equation}
We know that
$$
\|u_n\|^2=\|v_n\|^2+ \|u\|^2 + o(1), \,\, \text{as $n\to\infty$},
$$
so that $\limsup_n \|v_n\|\leq \rho_0\ .$ In turn, by the choice of $k$, we also have 
$$
\limsup_n \| w_n \|=\|v_n\| (k(1-\theta)+\theta)\leq \rho_0  (k(1-\theta)+\theta)\leq \rho_0  (k+1)<1.
$$
Since $\alpha_0<\alpha<\omega$, we can find $m>1$ very close to $1$ such that $m\alpha<\omega$. Then, by \eqref{cresceFa}, we get
\begin{align*}
\int_{\R}f(w_n)v_n &\leq\int_{\R}|w_n||v_n|\ud x + \int_{\R} (e^{\alpha  w_n^2}-1)|v_n| \ud x +D\int_{\R} |w_n|^{q-1} |v_n| \ud x  \\
&\leq \|w_n\|_{L^2}\|v_n\|_{L^2}+D\|w_n\|_{L^q}^{q-1}\|v_n\|_{L^q}+\Big(\int_{\R} (e^{m\alpha  w_n^2}-1)\ud x\Big)^{1/m}\|v_n\|_{L^{m'}}  \\
&\leq C\|w_n\|\|v_n\|+C\|w_n\|^{q-1}\|v_n\|+C\Big(\int_{\R} (e^{m\alpha  w_n^2}-1)\ud x\Big)^{1/m}\|v_n\|   \\
 &\leq C+C\Big(\int_{\R} (e^{m\alpha  w_n^2}-1)\ud x\Big)^{1/m}\leq C.  
\end{align*}
The last integral is bounded via Lemma~\ref{boundmoser}, since $\|w_n\|\leq 1$ and $m\alpha<\omega$.
The second term in \eqref{2tt} can be treated in a similar fashion, using 
the growth condition \eqref{cresceFb} in place of \eqref{cresceFa}.
%
We claim that $\psi_{\epsilon}$ verifies (iv) for both $F$ and $G$.
It suffices to prove
$$
\int_{\R}F( C_{\epsilon} u)\ud x < \infty, \quad\text{for all $\epsilon >0$}.
$$
By \eqref{cresceF} this occurs since by Proposition \ref{finite}, we have
$$
\int_{\R} (e^{\alpha C^{2}_{\epsilon}u^2} -1) \ud x <\infty.
$$
Analogous proof holds for $G$ via \eqref{cresceG}. 
We can finally apply \cite[Theorem 2]{BL} yielding \eqref{bn}. Thus
$$
\int_{\R}j(u_n) \ud x=\int_{\R}j(v_n) \ud x+\int_{\R}j(u) \ud x+ o(1), 
$$
for $j=F$ and $j=G.$ This concludes the proof.
\end{proof}

\noindent
The previous Lemma~\ref{BL} yields the following useful technical results.

\begin{lemma}\label{Cabo Branco-miramar} Let $ (u_n)\subset H^{1/2}(\R) $ be as in Lemma~\ref{Cabo Branco}
then for $v_n=u_n-u$ we have
$$
J'(u)u+\liminf_n J'(v_n)v_n =0,
$$ 
so that either $J'(u)u\leq 0$ or $\liminf_n J'(v_n)v_n <0$.
\end{lemma}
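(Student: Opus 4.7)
The plan is to combine a Hilbert-space Brezis--Lieb splitting for the $H^{1/2}$ norm with the nonlinear splitting already established in Lemma~\ref{BL}, and then use the fact that the minimizing sequence satisfies $J'(u_n)u_n=0$.

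First, I would observe that since $H^{1/2}(\R)$ is a Hilbert space and $u_n\rightharpoonup u$, the identity
\[
\|u_n\|^2=\|u\|^2+\|u_n-u\|^2+2\langle u,u_n-u\rangle
\]
together with $\langle u,u_n-u\rangle\to 0$ yields
\[
\|u_n\|^2=\|u\|^2+\|v_n\|^2+o(1),\qquad v_n:=u_n-u.
\]
Note that Lemma~\ref{Cabo Branco}(b) guarantees $\|u_n\|<\rho_0$ with $\rho_0$ small, so in particular $(v_n)$ is bounded, $v_n\rightharpoonup 0$, and Lemma~\ref{BL} applies, giving
\[
\int_{\R}f(u_n)u_n\,\ud x=\int_{\R}f(v_n)v_n\,\ud x+\int_{\R}f(u)u\,\ud x+o(1).
\]

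Next, subtracting these two asymptotic identities and regrouping gives
\[
\|u_n\|^2-\int_{\R}f(u_n)u_n\,\ud x
=\Big(\|u\|^2-\int_{\R}f(u)u\,\ud x\Big)+\Big(\|v_n\|^2-\int_{\R}f(v_n)v_n\,\ud x\Big)+o(1),
\]
which is simply $J'(u_n)u_n=J'(u)u+J'(v_n)v_n+o(1)$. Since $(u_n)\subset\mathcal{N}$ we have $J'(u_n)u_n=0$ for every $n$, so
\[
J'(u)u+J'(v_n)v_n=o(1),
\]
and taking $\liminf_n$ on both sides yields $J'(u)u+\liminf_n J'(v_n)v_n=0$.

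Finally, the dichotomy is immediate: if $J'(u)u>0$, then the identity forces $\liminf_n J'(v_n)v_n=-J'(u)u<0$; otherwise $J'(u)u\leq 0$. The only genuinely nontrivial ingredient is the applicability of Lemma~\ref{BL}, which in turn relies on the smallness of $\|u_n\|$ (hence of $\|v_n\|$) guaranteed by Lemma~\ref{Cabo Branco}(b); everything else is an inner-product computation plus passage to the liminf.
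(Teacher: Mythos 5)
Your proof is correct and follows essentially the same route the paper takes: split the $H^{1/2}$ norm via weak convergence, split the nonlinear term via Lemma~\ref{BL} (whose applicability hinges on the smallness of $\|u_n\|$ from Lemma~\ref{Cabo Branco}(b)), and combine with $J'(u_n)u_n=0$ before passing to the $\liminf$. The paper states this more tersely, but the underlying argument is identical; you have simply filled in the intermediate steps explicitly.
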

\begin{proof}
Recalling that $v_n=u_n-u$, we get $\|u_n\|^2=\|v_n\|^2 + \|u\|^2+o(1).$ Then by Lemma \ref{BL},
$$
\int_{\mathbb{R}}f(u_n)u_n  \, \ud x =\int_{\mathbb{R}}f(v_n)v_n  \, \ud x+\int_{\mathbb{R}}f(u)u  \, \ud x +\mathit{o}(1).
$$
Since $u_n \in \mathcal{N}$, by using the above equality, the assertion follows.
%
\end{proof}

\begin{lemma}\label{lemma-3} Let $(u_n)\subset  \mathcal{N}$  be a minimizing sequence for $J$ on $\mathcal{N}$,  such that  $u_n\rightharpoonup u$ weakly in $H^{1/2}(\R)$  as $n\rightarrow \infty$.  If $u\in  \mathcal{N}$, then $J(u)=m$.
\end{lemma}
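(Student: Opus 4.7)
The plan is to show $J(u)=m$ by proving the two inequalities $J(u)\ge m$ and $J(u)\le m$. The first is immediate, since $u\in\mathcal{N}$ and $m=\inf_{\mathcal{N}}J$. The core of the argument is the reverse inequality, which I would establish via a weak lower semicontinuity argument applied to the restriction of $J$ on $\mathcal{N}$ rewritten in a more convenient form.

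First I would observe that whenever $w\in\mathcal{N}$ the constraint $\|w\|^2=\int_{\R}f(w)w\,\ud x$ allows one to eliminate the quadratic part:
$$
J(w)=\frac{1}{2}\|w\|^2-\int_{\R}F(w)\,\ud x=\frac{1}{2}\int_{\R}\bigl(f(w)w-2F(w)\bigr)\,\ud x=\frac{1}{2}\int_{\R}\H(w)\,\ud x,
$$
where $\H$ is the function introduced in Remark~\ref{properts}. Crucially, by \eqref{Tambia} one has $\H(s)\ge 0$ for all $s\in\R$, so on $\mathcal{N}$ the functional $J$ is represented as an integral of a nonnegative integrand.

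Next, by Lemma~\ref{Cabo Branco}(a) the sequence $(u_n)$ is bounded in $H^{1/2}(\R)$, and $u_n\rightharpoonup u$ weakly. Passing to a subsequence (using compact embeddings $H^{1/2}(\R)\hookrightarrow L^2_{\mathrm{loc}}(\R)$ together with a diagonal argument) one may assume $u_n\to u$ a.e.\ in $\R$. Since $\H$ is continuous, it follows that $\H(u_n)\to\H(u)$ a.e.\ in $\R$, and since $\H\ge 0$, Fatou's lemma yields
$$
\int_{\R}\H(u)\,\ud x\le\liminf_n\int_{\R}\H(u_n)\,\ud x.
$$

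Applying the identity $J(w)=\tfrac{1}{2}\int_{\R}\H(w)\,\ud x$ to both $u$ (which lies in $\mathcal{N}$ by hypothesis) and each $u_n\in\mathcal{N}$ gives
$$
J(u)=\frac{1}{2}\int_{\R}\H(u)\,\ud x\le\frac{1}{2}\liminf_n\int_{\R}\H(u_n)\,\ud x=\liminf_n J(u_n)=m,
$$
which together with $J(u)\ge m$ proves $J(u)=m$. The only subtle point is the a.e.\ convergence needed for Fatou, which is the main technical anchor of the argument; everything else reduces to exploiting that on the Nehari manifold $J$ is represented by the nonnegative integrand $\H$ coming from $\mathbf{(AR)}$.
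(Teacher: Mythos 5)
Your argument is correct and coincides with the paper's own proof: both rewrite $J$ on $\mathcal{N}$ as $\tfrac{1}{2}\int_{\R}\H(\cdot)\,\ud x$ using the constraint $J'(w)w=0$ and the $\mathbf{(AR)}$-derived nonnegativity \eqref{Tambia}, then apply Fatou's lemma to the a.e.\ convergent subsequence to get $J(u)\le m$, the reverse inequality being immediate from $u\in\mathcal{N}$. Your explicit remark about passing to an a.e.\ convergent subsequence is a harmless elaboration of a step the paper leaves implicit.
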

\begin{proof} Let $(u_n)\subset  \mathcal{N}$ and $u\in \mathcal{N}$ as above, thus 
\[
m+\mathit{o}(1)=J(u_n)-\frac{1}{2} J'(u_n)u_n=\frac{1}{2}\int_{\R} \H(u_n) \, \ud x
\]
which together with Fatou's lemma (recall that \eqref{Tambia} holds) implies 
\begin{equation*}
m =  \frac{1}{2}\liminf_{n} \int_{\R}\H(u_n) \ud x
\geq  \frac{1}{2}\liminf_{n} \int_{\R}\H(u) \ud x= J(u)-\frac{1}{2} J'(u)u=J(u),
\end{equation*}
which yields the conclusion.
\end{proof}

\section{Proof of Theorem~\ref{Theorem} concluded}

\noindent
Let $(u_n)\subset  \mathcal{N}$  be a minimizing sequence for $J$ on $\mathcal{N}$.  
From Lemma~\ref{Cabo Branco} (a),  $(u_n)$  is bounded in  $H^{1/2}(\R)$. Thus, up to a subsequence, we have $u_n\rightharpoonup u$ weakly in $H^{1/2}(\R)$.
\begin{assertion} There exist a sequence $(y_n) \subset \R$ and constants $\gamma, \, R>0$ such that 
\[
\liminf_{n}\int_{y_n-R}^{y_n+R} |u_n|^2 \, \ud x \geq \gamma >0
\]
\end{assertion}
\noindent If not, for any $R>0$,
\[
\liminf_{n} \sup_{y\in \R} \int_{y-R}^{y+R} |u_n|^2 \, \ud x = 0.
\]
Using a standard concentration-compactness principle due to P.L.\ Lions (it is easy to see that the argument remains valid for the case studied here) 
we can conclude that $u_n \rightarrow 0$  in $L^q(\R)$ for {\em any $q>2$}, which is a contradiction with Lemma~\ref{Cabo Branco} (c).
\bigskip
\noindent
Define $\bar u_n (x)=u_n(x+y_n)$. Then $J(u_n)=J(\bar u_n)$ and  without of loss generality we can assume $y_n=0$ for any $n$. Notice that $(\bar u_n)$ is also a minimizing sequence for $J$ on $\mathcal{N}$, which it is bounded and satisfies  
\[
\liminf_{n}\int_{-R}^{R} |\bar u_n|^2 \, \ud x \geq \gamma, \quad \mbox{for some} \quad \gamma >0,
\]
and $\bar u_n\rightharpoonup \bar u$ weakly in $H^{1/2}(\R)$, then $\bar u \not = 0 \; ( u \not =0)$.

\begin{assertion} \label{Formosa}
 $J'(u)u=0$. 
\end{assertion} 

\noindent
If Assertion~\ref{Formosa} holds, then combining (d) of Lemma~\ref{lemma-1} 
and Lemma~\ref{lemma-3} we have the result.

\smallskip
\noindent
We shall now prove Assertion~\ref{Formosa}. Suppose by contradiction that $J'(u)u\not =0$. 
\vskip4pt
\noindent
$\bullet$ If $J'(u)u<0$, by Lemma~\ref{lemma-1}\,(b), there exists $ 0 < \lambda <1 $ such that $\lambda u \in \mathcal{N}$. Thus
\begin{equation}\label{sete}
\lambda \|u\|^2=\int_{\R} f( \lambda u) u \, \ud x.
\end{equation}
Using \eqref{Tambia} in combination with Fatou's Lemma, we obtain
\begin{equation*}
m =\liminf_n   \frac{1}{2}\int_{\R} \H(u_n) \, \ud x 
\geq \frac{1}{2}\int_{\R} \H(u) \, \ud x>  \frac{1}{2}\int_{\R} \H( \lambda u) \, \ud x 
=  J(\lambda u)-\frac{1}{2}J'(\lambda u)\lambda u
=  J(\lambda u),
\end{equation*}
which implies that $J(\lambda u) <m$ and, hence, a contradiction. Here we have used \eqref{Tambia-2}. 
\vskip4pt
\noindent
$\bullet$ If $J'(u)u>0$, by Lemma~\ref{Cabo Branco-miramar}, we get $\liminf_n J'(v_n)v_n <0$. Taking a
subsequence, we have $J'(v_n)v_n <0$, for $n$ large. By Lemma~\ref{lemma-1}\,(b), 
there exists $\lambda_n \in (0,1)$ such that $\lambda_n v_n \in \mathcal{N}$.

\begin{assertion} 
$\limsup_n \lambda_n <1$.
\end{assertion}
\vskip2pt
\noindent
If $\limsup_n \lambda_n =1$, up to a sub-sequence, we can assume that $\lambda_n \rightarrow 1$, then 
\begin{equation*}
J'(v_n) v_n =  J'(\lambda_n v_n) \lambda_n v_n +o(1).
\end{equation*}
This follows provided that
\begin{equation}
\label{toprove1}
\int_\R f(v_n) v_n \ud x=\int_\R f(\lambda_n v_n) \lambda_n v_n \ud x+o(1).
\end{equation}
In fact, notice that if $\eta_n:=v_n+\tau v_n (\lambda_n-1)$ for some $\tau\in (0,1)$, it follows
$$
f(v_n) v_n- f(\lambda_n v_n) \lambda_n v_n=\big(f'(\eta_n)\eta_n+f(\eta_n)\big)v_n (1-\lambda_n).
$$
Since $\|\eta_n\|=\|v_n+\tau v_n (\lambda_n-1)\|\leq \lambda_n\|v_n\|\leq\rho_0$, it follows by
arguing as for the justification of formula \eqref{2tt}, that
$$
\sup_{n\in\N}\int_{\R}|f'(\eta_n)\eta_n+f(\eta_n)||v_n| \ud x<\infty,
$$
so that \eqref{toprove1} follows, since $\lambda_n\to 1$.
Since $ \lambda_n v_n \in \mathcal{N}$ we have $J'(\lambda_n v_n) \lambda_n v_n=0$ which implies that 
\[
J'(v_n) v_n =  \mathit{o}(1),
\]
which is a contradiction with $\lim_n J'(v_n)v_n <0$. Thus, up to subsequence, we may assume that  $\lambda_n \rightarrow \lambda_0 \in(0,1).$
Arguing as before, from \eqref{Tambia-2} we infer
\begin{equation*}
m+o(1)=\frac{1}{2}\int_{\mathbb{R}} \H(u_n) \, \ud x\geq  \frac{1}{2}\int_{\mathbb{R}} \H(\lambda_n u_n) \, \ud x,
\end{equation*}
since $\H(u_n) \geq \H(\lambda_n u_n).$ 
By means of Lemma~\ref{BL} applied to $w_n=\lambda_n u_n$ (whose norm is small, being 
smaller than the norm of $u_n$) and $w=\lambda_0 u$ 
we have in turn
$$
\int_{\R} \H(\lambda_n u_n) \, \ud x=\int_{\R} \H(\lambda_n u_n-\lambda_0 u) \, \ud x+\int_{\R} \H(\lambda_0 u) \, \ud x+o(1).
$$
Furthermore, we have
\begin{equation}
\label{toprove2}
\int_{\R} \H(\lambda_n u_n-\lambda_0 u) \, \ud x=\int_{\R} \H(\lambda_n v_n) \, \ud x+o(1).
\end{equation}
In fact, notice that $\lambda_n u_n-\lambda_0 u=\lambda_n v_n+\gamma_n u$, 
where $\gamma_n:=\lambda_n-\lambda_0\to 0$ as $n\to\infty$. We have
$$
\H(\lambda_n u_n-\lambda_0 u)-\H(\lambda_n v_n)=\H'(\hat \eta_n)u\gamma_n,  \,\,\quad
\hat\eta_n:=\tau u\gamma_n+\lambda_n v_n
$$
for $\tau\in (0,1)$ and $\|\hat\eta_n\|=\|\tau u\gamma_n+\lambda_n v_n\|\leq \gamma_n \|u\|+\lambda_n\|v_n\|\leq \rho_0$ for $n$ large. 
Then, arguing as for the justification of \eqref{2tt}, we get
$$
\sup_{n\in\N}\int_{\R} |\H'(\hat \eta_n)||u| \ud x\leq
\sup_{n\in\N}\int_{\R}|f'(\hat\eta_n)\hat\eta_n+f(\hat\eta_n)||v_n| \ud x<\infty,
$$
which yields \eqref{toprove2} since $\gamma_n\to 0$ as $n\to\infty$.
Therefore, we obtain
\begin{align*}
m+\mathit{o}(1)&\geq  \frac{1}{2}\int_{\R} \H(\lambda_n v_n)\, \ud x  
+  \frac{1}{2}\int_{\R}  \H(\lambda_0 u)\, \ud x  \\
&=J(\lambda_n v_n)-\frac{1}{2} J'(\lambda_n v_n)\lambda v_n+\frac{1}{2}\int_{\R}  \H(\lambda_0 u)\, \ud x
=J(\lambda_n v_n)+\frac{1}{2}\int_{\R}  \H(\lambda_0 u)\, \ud x.
\end{align*}
Since $u \neq 0,$ we have $\int_{\R}  \H(\lambda_0 u)\, \ud x>0$. Then
$J(\lambda_n v_n) < m$ for large $n$, a contradiction.  \qed





\bigskip
\bigskip


\bigskip

\end{document}